\theoremstyle:=definition,remark,plain\do{
\expandafter\g@addto@macro\csname th@\theoremstyle\endcsname{
\addtolength\thm@preskip\parskip}}
\numberwithin{equation}{section}
\theoremstyle{plain}
\newtheorem{theorem}[subsection]{Theorem}
\newtheorem{proposition}[subsection]{Proposition}
\newtheorem{lemma}[subsection]{Lemma}
\newtheorem{corollary}[subsection]{Corollary}
\newtheorem{question}[subsection]{Question}
\newtheorem{remark}[subsection]{Remark}
\theoremstyle{definition}
\newtheorem{definition}[subsection]{Definition}
\newtheorem{example}[subsection]{Example}
\renewcommand{\leq}{\leqslant}
\renewcommand{\geq}{\geqslant}
\newcommand{\eps}{\varepsilon}
\DeclareMathOperator{\modulo}{mod}
\DeclareMathOperator{\End}{End}
\DeclareMathOperator{\Id}{Id}
\DeclareMathOperator{\Ham}{Ham}
\DeclareMathOperator{\Sch}{Sch}
\def\AA{{\mathcal A}}
\def\AAA{{\mathfrak A}}
\def\BB{{\mathcal B}}
\def\BBB{{\mathfrak B}}
\def\N{{\mathbb N}}
\def\R{{\mathbb R}}
\def\S{{\mathbb S}}
\def\Z{{\mathbb Z}}
\begin{document}

\title{Soficity for monoids, semigroups, and general dynamical systems}

\author{Jan Cannizzo}

\address{Department of Mathematical Sciences\newline
\indent Stevens Institute of Technology\newline
\indent 1 Castle Point on Hudson\newline
\indent Hoboken, NJ 07030\newline}

\email{jan.c.cannizzo@gmail.com}



\maketitle

\begin{abstract}
We examine several definitions of soficity for monoids obtained by generalizing various definitions of sofic groups. They are not all equivalent and include the definition recently introduced by Ceccherini-Silberstein and Coornaert. One of these definitions readily generalizes to semigroups (albeit in an arguably unsatisfying way), thus addressing a question asked by Kambites. We conclude by proposing a definition of soficity for a general dynamical system consisting of a semigroup acting by measure-preserving transformations on a probability space.
\end{abstract}

\section{Introduction}

Since its introduction, the notion of soficity has proved to be of fundamental interest in several branches of mathematics. \emph{Sofic groups} were first defined by Gromov~\cite{Gro}, who showed that they satisfy Gottschalk's surjunctivity conjecture in topological dynamics, and they were given their name by Weiss~\cite{Wei} shortly thereafter (the word ``sofic'' is derived from the Hebrew word for ``finite''). Sofic groups are closely linked to a number of other conjectures, notably Connes' embedding conjecture and the determinant conjecture in the theory of von Neumann algebras (see \cite{ES} and the survey of Pestov \cite{Pes} for more), and it was a breakthrough when Bowen~\cite{Bow} defined a notion of \emph{sofic entropy} for measure-preserving actions of sofic groups, thereby extending the classical entropy theory developed for $\Z$-actions by Kolmogorov and Sinai.

It was soon realized that the notion of soficity can be extended to objects other than groups. By formulating soficity in terms of the weak convergence of measures (so-called \emph{Benjamini-Schramm convergence}, introduced in \cite{BS}), it became possible to speak of \emph{sofic random graphs}---see \cite{AL}---and, mutatis mutandis, \emph{sofic random Schreier graphs} (see, for instance, \cite{Can} for a definition). Using this idea, Elek and Lippner~\cite{EL} went on to define the more general notion of a \emph{sofic equivalence relation}, and working in greater generality still, Dykema, Kerr, and Pichot~\cite{DKP} have recently defined \emph{sofic groupoids} (see also \cite{Bow2}).

In each of the aforementioned contexts, the following question---arguably the most na\"{i}ve question it is possible to ask---has resisted proof and remains open as of this writing.
\begin{question}\label{q1}
Does there exist a nonsofic object?
\end{question}
Although the general opinion seems to be that nonsofic objects should exist, finding an example of such an object has proven to be difficult. It was therefore remarkable when Ceccherini-Silberstein and Coornaert~\cite{CSC} recently defined \emph{sofic monoids} and went on to answer Question~\ref{q1} by showing that there does exist a nonsofic monoid. In fact their example is not a particularly exotic object: it is the well-known \emph{bicyclic monoid}, namely the monoid with presentation
\[
B=\langle a,b\mid ab=e\rangle,
\]
where $e\in B$ denotes the identity. Further work on sofic monoids was carried out by Kambites~\cite{Kam} who, building on \cite{CSC}, exhibited a large class of sofic monoids.

Yet Ceccherini-Silberstein and Coornaert's definition of a sofic monoid, although certainly natural from an algebraic point of view, can be said to mark a departure from other definitions of soficity. Soficity as it is defined for groups, random graphs, equivalence relations, and groupoids is very much connected with ergodic theory---in particular, the presence of an invariant measure---whereas this connection appears to be absent from Ceccherini-Silberstein and Coornaert's theory. In fact it is automatic in the theories of other sofic objects that the existence of an underlying invariant measure is a prerequisite for defining soficity in the first place; hence, it is natural to ask whether it is possible to develop a theory of sofic monoids that takes invariance into account.

As noted by Kambites, soficity as defined in \cite{CSC} ``imposes no restriction whatsoever on the internal complexity of the monoid outside the group of units'' (\cite{Kam}, p.~12). Indeed, Proposition~4.7 of \cite{CSC} implies that, though the bicyclic monoid $B$ is nonsofic, the monoid $\hat{B}\colonequals B\cup\{e'\}$ obtained by adjoining a new element $e'$ to $B$ and defining $e'$ to be the identity is sofic---a striking fact, given that the internal structure of $\hat{B}$ is identical to that of $B$ outside of the point $e'$. Kambites concludes by remarking that
\begin{quote}
If seeking applications in semigroup theory more widely, one is drawn to ask if there is an alternative, probably stronger, definition of a sofic monoid which also generalises sofic groups but exerts more control on the internal structure of the rest of the monoid. A natural test of whether a definition is satisfactory in this respect would be whether the resulting class is closed under the taking of monoid subsemigroups.... Such a definition, if found, is also likely to extend naturally to semigroups without an identity element (\cite{Kam}, p.~12).
\end{quote}
One of the aims of this paper is to explore what such a definition might look like, beginning with the observation that a sofic group (just like an \emph{amenable group}) may naturally be defined in terms of \emph{approximately invariant measures}. We do, in fact, arrive at an alternative definition of a sofic monoid which readily generalizes to semigroups, but a semigroup is sofic in this new sense if and only if it embeds into a sofic group---arguably a heavy-handed condition. On the other hand, the definition is natural in that it provides a clear connection with ergodic theory, and although many semigroups (including the bicyclic monoid) remain ``nonsofic'' according to this definition, they are nonsofic for obvious reasons---roughly speaking, because they are not \emph{invariant structures}. This same phenomenon appears in the theories of other sofic objects as well: Given a probability measure on the space of Schreier graphs of a finitely generated group which is not conjugation-invariant, the corresponding random Schreier graph might be said to be ``nonsofic,'' but this is rather uninteresting, as it is a consequence of the definition of a sofic random Schreier graph that its law is conjugation-invariant. The principle here is that one must first restrict one's attention to invariant structures, and only then ask whether they are sofic.

To conclude, we offer another way to relate semigroups and soficity, namely by passing from semigroups to their actions on spaces equipped with an invariant measure. We thus formulate a notion of a general \emph{sofic dynamical system}. This generalizes the notion of a sofic equivalence relation introduced by Elek and Lippner and may be an interesting direction for future research.

This paper is organized as follows: In Section~2, we review background material and single out four equivalent definitions of sofic groups, each of which generalizes in a natural way to monoids. In Section~3, we show how these various generalizitions---one of which is the definition of Ceccherini-Silberstein and Coornaert---relate to one another and, in particular, are not all equivalent. Finally, in Section~4, we introduce the notion of a general sofic dynamical system, provide a basic example of such a system, and pose several questions related to our definition.

\section{Sofic groups and sofic monoids}

Sofic groups are, speaking very roughly, groups which admit finite approximations. They are a common generalization of amenable and residually finite groups (although there exist sofic groups which are neither amenable nor residually finite) and, as we will see, can be defined in a number of equivalent ways. We first establish some preliminaries.

If $S$ is a semigroup and $\AA$ is a generating set for $S$, then the \emph{(left) Cayley graph} $\Gamma=\Gamma(S,\AA)$ of $S$ constructed with respect to $\AA$ is the graph whose vertex set is identified with $S$ and whose edge set consists of all triples $(s,a,as)$, where $a\in\AA$ is a generator and $s\in S$ is an arbitrary element. The edge $(s,a,as)$ is understood to be directed from $s$ to $as$, which are its initial and terminal points, respectively, and labeled with $a$. The distance $d_\AA(s,t)$ between two vertices $s,t\in\Gamma$ is defined to be the length of a shortest sequence of edges, or path,
\[
(x_0,a_0,y_0),\ldots,(x_n,a_n,y_n)
\]
such that $x_0=s$, $y_n=t$, and $y_i=x_{i+1}$ for all $0\leq i\leq n-1$, provided that such a path exists. We set $d_\AA(s,s)=0$ for all $s\in\Gamma$. Note that $d_\AA$ is in general not a metric, since in a general semigroup, $d_\AA(s,t)$ need not be defined (in fact it is possible for $\Gamma$ to be disconnected, e.g.\ if $S$ is a free semigroup of rank $n>1$).

Given a vertex $s\in\Gamma$, we define the ball of radius $r\in\N$ centered at $s$, denoted $B_r(\Gamma,s)$, or just $B_r(s)$ for short, to be the subgraph of $\Gamma$ induced by the vertices at distance less than or equal to $r$ from $s$. If $S$ is a group and $\AA$ is symmetric, so that $a\in\AA$ implies $a^{-1}\in\AA$, then $B_r(s)$ coincides with the ball of radius $r$ taken with respect to the usual graph metric on $\Gamma$, but as before, note that this fails to be true for a general semigroup.

If $S$ has an identity $e$, then it is natural to regard $\Gamma(S,\AA)$ as a \emph{rooted graph} $(\Gamma,e)$ by distinguishing the vertex $e$. Given two rooted graphs $(\Gamma,x)$ and $(\Delta,y)$ whose edges are directed and labeled with elements of a set $\AA$ (we do not assume here that $\Gamma$ and $\Delta$ are Cayley graphs), we define a graph homomorphism $\phi:\Gamma\to\Delta$ to be a graph homomorphism in the usual sense which also satisfies $\phi(x)=y$ and maps one edge to another in such a way that respects the direction and labeling. A graph homomorphism is a graph isomorphism if it is invertible.

If $X$ is a set, we denote by $\End(X)$ the semigroup of all endomorphisms (that is, self-maps) $f:X\to X$, where the binary operation is composition of functions. If $X$ is finite, recall that the \emph{(normalized) Hamming metric} on $\End(X)$ is defined to be
\[
d_{\Ham}(f,g)\colonequals\frac{1}{|X|}|\{x\in X\mid f(x)\neq g(x)\}|.
\]
Given a semigroup $S$, we now define a \emph{$(K,\eps)$-action} of $S$ on a finite set as follows (see also \cite{ES} and \cite{Pes}).
\begin{definition}\label{def1}
Let $S$ be a semigroup, $\eps>0$ a real number, and $K\subseteq S$ a finite subset of $S$. A \emph{$(K,\eps)$-action} of $S$ on a finite set $X$ is a function $\psi:K\to\End(X)$ such that the following hold:
\begin{itemize}
\item[i.] If $s,t,st\in K$, then $d_{\Ham}\left(\psi(st),\psi(s)\circ\psi(t)\right)\leq\eps$.

\item[ii.] If $S$ has an identity $e$ and $e\in K$, then $d_{\Ham}\left(\psi(e),\Id_X\right)\leq\eps$, where $\Id_X$ is the identity map on $X$.

\item[iii.] For all distinct $s,t\in K$, one has $d_{\Ham}\left(\psi(s),\psi(t)\right)\geq1-\eps$.
\end{itemize}
\end{definition}

It can be useful to think of a $(K,\eps)$-action as a finite model of a (free) semigroup action in which a certain amount of error is allowed. We are now in a position to define sofic groups and will in fact present four equivalent definitions.

\begin{definition}\label{soficgroup} The following are equivalent definitions of a sofic group.
\begin{enumerate}

\item A group $G$ with finite generating set $\AA$ is sofic if for any $r\geq0$ and any $\eps>0$, there exists a finite $\AA$-labeled graph $\Gamma$ such that
\[
\frac{|\{x\in\Gamma\mid B_r(x)\cong G_{r,\AA}\}|}{|\Gamma|}\geq1-\eps,
\]
where $B_r(x)$ is the ball of radius $r$ centered at $x\in\Gamma$ and $G_{r,\AA}$ is the ball of radius $r$ centered at the identity in the Cayley graph $\Gamma(G,\AA)$.

\item A group $G$ is sofic if for any finite subset $K\subseteq G$ and any $\eps>0$, it admits a $(K,\eps)$-action $\psi:K\to\End(X)$ on a finite set $X$.

\item A group $G$ is sofic if for any finite subset $K\subseteq G$ and any $\eps>0$, it admits a $(K,\eps)$-action $\psi:K\to\End(X)$ on a finite set $X$ such that the uniform probability measure $\mu_X$ on $X$ is approximately invariant, in the sense that
\[
\|\mu_X-\psi(g)_*\mu_X\|\leq\eps
\]
for all $g\in K$, where $\|\cdot\|$ denotes the total variation norm.

\item A group $G$ is sofic if it acts essentially freely on a measure space $(X,\mu)$ equipped with an invariant, finitely additive probability measure $\mu$ defined on the set of all subsets of $X$.


\end{enumerate}
\end{definition}

Definition~(1) is the original definition of Gromov. Note that, although it applies only to finitely generated groups, soficity may be regarded as a local property: by Definitions~(2) or (3), an arbitrary group is sofic if and only if all of its finitely generated subgroups are sofic. The condition of being approximable by finite labeled graphs is called the \emph{Weiss condition} in \cite{CSC}. Definition~(2)\ is due to Elek and Szab\'{o}. Definition~(3) is a generalization of a definition of an amenable group: A group $G$ is \emph{amenable} if for any finite subset $K\subseteq G$ and any $\eps>0$, there exists a finite subset $X\subseteq G$ such that $\mu_X$ is approximately invariant, in the sense that
\[
\|\mu_X-g_*\mu_X\|\leq\eps
\]
for all $g\in K$. Finally, Definition~(4) is again due to Elek and Szab\'{o} \cite{ES}. It too generalizes a known definition of an amenable group: A finitely generated group $G$ is amenable if and only if it admits an invariant, finitely additive probability measure defined on the set of all subsets of $G$.

Definitions~(1)-(4) do not represent an exhaustive list of definitions of a sofic group, but they are such that any one of them readily generalizes to a definition of a sofic monoid. Accordingly, we formulate the following definition.
\begin{definition}
We say that a monoid is ($k$)-\emph{sofic} if it is sofic according to the definition obtained by replacing the word ``group'' with the word ``monoid'' in part ($k$) of Definition~\ref{soficgroup} above.
\end{definition}
A monoid is sofic in the sense of Ceccherini-Silberstein and Coornaert if it is (2)-sofic, and one might guess that, given that they are equivalent for groups, Definitions~(1)-(4) will turn out to be equivalent for monoids as well. This is not the case, however. We will show that (1)-soficity and (2)-soficity are indeed equivalent, thereby clarifying an issue raised by Ceccherini-Silberstein and Coornaert, but that (1)-soficity and (2)-soficity are not equivalent to (3)-soficity, which implies (4)-soficity.

We do not wish to dwell on the question of which of these definitions, if any, yields the ``right'' notion of soficity for monoids. We feel that this is a matter of taste, and it is not our aim to tout one definition as right and another as wrong. We do wish, however, to explore these varying definitions in some detail, and to point out how (3)-soficity and (4)-soficity align more naturally with soficity as it is defined for other structures.

\section{Relating different notions of soficity for monoids}

Ceccherini-Silberstein and Coornaert show (see Theorem~6.1 of \cite{CSC}) that (1)-soficity implies (2)-soficity for monoids, but they establish the converse only under the assumption that the monoid is left-cancellative. Our first result is that equivalence holds in full generality.

\begin{theorem}\label{1iff2}
A monoid is (1)-sofic if and only if it is (2)-sofic.
\end{theorem}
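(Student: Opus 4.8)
The forward implication, that (1)-soficity implies (2)-soficity, is established in full generality by Ceccherini-Silberstein and Coornaert (Theorem~6.1 of \cite{CSC}), so the plan is to prove the converse---that (2)-soficity implies (1)-soficity---without assuming left-cancellativity. Suppose $M$ is (2)-sofic with finite generating set $\AA$, and fix $r\geq 0$ and $\eps>0$. First I would take the finite set $K\subseteq M$ to be the ball $M_{r+1,\AA}$ of radius $r+1$ about the identity in the Cayley graph $\Gamma(M,\AA)$, and invoke (2)-soficity to obtain a $(K,\delta)$-action $\psi\colon K\to\End(X)$ on a finite set $X$, where $\delta>0$ is a small parameter to be fixed at the end. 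From this action I would build the finite $\AA$-labeled graph $\Gamma$ with vertex set $X$ and an edge $(x,a,\psi(a)(x))$ for every $x\in X$ and every generator $a\in\AA$. By construction every vertex of $\Gamma$ has exactly one outgoing edge with each label, mirroring the local structure of the monoid Cayley graph.

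The heart of the argument is to show that, for all but a $\delta$-fraction of vertices $x\in X$, the map $\phi\colon M_{r,\AA}\to B_r(x)$ sending a monoid element $s$ at distance $\leq r$ from the identity to the vertex $\psi(s)(x)$ is a rooted graph isomorphism onto the ball $B_r(x)$. I would verify this by isolating, for each of the finitely many conditions to be checked within radius $r$, the set of vertices $x$ at which $\phi$ can fail. These conditions are: multiplicativity of $\phi$, so that a vertex reached by a path labeled $w$ is exactly $\psi(\bar w)(x)$, where $\bar w\in M$ is the corresponding product---this follows from condition~(i) of Definition~\ref{def1} applied along each of the finitely many paths of length $\leq r$, and simultaneously yields that $\phi$ maps onto $B_r(x)$; the normalization $\psi(e)(x)=x$ at the root, from condition~(ii); and, crucially, the injectivity of $\phi$ on $M_{r,\AA}$ together with the absence of spurious edges, both of which follow from condition~(iii), since distinct elements $s\neq t$ of $K$ satisfy $\psi(s)(x)=\psi(t)(x)$ for at most a $\delta$-fraction of the vertices $x$. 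Each condition excludes at most $\delta\lvert X\rvert$ vertices, and there are only finitely many of them---a number $C$ depending only on $r$ and $\AA$, not on $X$---so their union excludes at most $C\delta\lvert X\rvert$ vertices. Taking $\delta$ small enough that $C\delta\leq\eps$ then yields the density bound required for (1)-soficity.

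The step I expect to be the main obstacle---and the point at which the left-cancellativity hypothesis enters the argument of \cite{CSC}---is ruling out collapses and shortcuts introduced by the possible non-injectivity of the maps $\psi(a)$. When $M$ is not left-cancellative, distinct vertices of the Cayley ball can share a neighbor, and one must guard against the finite model either identifying two elements of $M_{r,\AA}$ or making an element outside the ball reachable within $r$ steps, which would enlarge $B_r(x)$ and destroy the isomorphism. The key observation is that condition~(iii) is tailored to exactly this purpose: it forces $s\mapsto\psi(s)(x)$ to be injective on $K$ for all but a $\delta$-fraction of $x$, which simultaneously rules out both failure modes and thereby renders the cancellativity hypothesis unnecessary. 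Verifying that the incoming-edge structure at each vertex of $B_r(x)$ matches that of the Cayley ball---so that $\phi$ is a genuine graph isomorphism and not merely a label-preserving bijection---is the one place where this injectivity must be combined carefully with condition~(i), and I would treat it as the technical core of the proof.
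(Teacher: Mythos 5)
Your proposal is correct and takes essentially the same approach as the paper's proof: both build a labeled graph on the finite set $X$ from the $(K,\delta)$-action, excise a union of bad sets of small measure (multiplicativity failures via condition~(i), collisions via condition~(iii)), and show that for every remaining vertex $x$ the orbit map $s\mapsto\psi(s)(x)$ is a rooted labeled-graph isomorphism from the Cayley ball onto $B_r(x)$, with injectivity coming from (iii) and surjectivity from (i) together with the unique-outgoing-edge structure. The only difference is bookkeeping: the paper first reduces to radius $1$ by enlarging the generating set to all elements of $M_{r,\AA}$, whereas you work at radius $r$ directly with $K=M_{r+1,\AA}$ and iterate condition~(i) along paths---two equivalent ways of ensuring all relevant products lie in $K$.
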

\begin{proof}

As mentioned above, it is established in \cite{CSC} that (1)-soficity implies (2)-soficity. Conversely, let $M$ be a finitely generated (2)-sofic monoid, $\AA$ a finite generating set of $M$, and $M_{r,\AA}$ the $r$-neighborhood of the identity in the Cayley graph of $M$ constructed with respect to $\AA$.

Observe that it is sufficient to treat the case when $r=1$, since it is always possible to pass to a larger generating set. That is, constructing a graph that locally looks like $M_{r,\AA}$ is tantamount to constructing a graph that locally looks like $M_{1,\BB}$, where $\BB$ is, say, the generating set consisting of all monoid elements in $M_{r,\AA}$.

Accordingly, let $K$ consist of all monoid elements in $M_{1,\AA}$, and let $\psi:K\to\End(X)$ be a $(K,\eps)$-action of $M$ on a finite set $X$. Note that $\AA\subseteq K$. For simplicity, write $\psi_s\colonequals\psi(s)$, and endow $X$ with a graph structure by taking as its set of edges all triples of the form $(x,s,\psi_s(x))$, where $s\in K$. The edge $(x,s,\psi_s(x))$ is understood to be directed from $x$ to $\psi_s(x)$ and labeled with $s$. Denote by $\mu_X$ the uniform probability measure on $X$.

By Proposition~6.2 of \cite{CSC}, we may assume that $\psi_e=\Id_X$. For distinct elements $s,t\in K$, define the subset $A_{s,t}\subseteq X$ as
\[
A_{s,t}\colonequals\{x\in X\mid\psi_s(x)=\psi_t(x)\}.
\]
For elements $s,t\in K$ such that $st\in K$, define the subset $A_{s,t,st}\subseteq X$ as
\[
A_{s,t,st}\colonequals\{x\in X\mid(\psi_s\circ\psi_t)(x)\neq\psi_{st}(x)\}.
\]
By the definition of a $(K,\eps)$-action, each of the sets $A_{s,t}$ and $A_{s,t,st}$ has measure less than or equal to $\eps$, so that
\begin{equation}\label{bound1}
\mu_X\left(\bigcup_{s\neq t}A_{s,t}\right)\leq|K|^2\eps
\end{equation}
and
\begin{equation}\label{bound2}
\mu_X\left(\bigcup_{s,t,st\in K}A_{s,t,st}\right)\leq|K|^2\eps\equalscolon\frac{\delta}{2}.
\end{equation}
Denoting by $A$ the union of all sets $A_{s,t}$ and $A_{s,t,st}$, it follows from the bounds (\ref{bound1}) and (\ref{bound2}) that $\mu_X(X\backslash A)\geq1-\delta$. Clearly, $1-\delta$ can be made arbitrarily close to $1$ by choosing $\eps$ to be sufficiently close to $0$.


Given a point $x\in X\backslash A$, we would like to exhibit an isomorphism $\phi:M_{1,\AA}\to B_1(x)$ such that $\phi(e)=x$, where $B_1(x)$ denotes the $1$-neighborhood of $x$. If $x\in X\backslash A$, then consider the graph homomorphism $\phi:M_{1,\AA}\to B_1(x)$ given by
\[
\phi(s,t,ts)=(\psi_{s}(x),t,(\psi_t\circ\psi_s)(x))=(\psi_s(x),t,\psi_{ts}(x)).
\]
Let $(s,t,ts)$ and $(s',t',t's')$ be two edges in $M_{1,\AA}$. By construction, if $s\neq s'$, then $\psi_s(x)\neq\psi_{s'}(x)$. Likewise, if $ts\neq t's'$, then $\psi_{ts}(x)\neq\psi_{t's'}(x)$, which shows that $\phi$ is an embedding. It remains to show that $\phi$ is in fact surjective. To this end, note that it is impossible that there exist an edge $(x,s,\psi_s(x))$ in $B_1(x)$ which is not in the image of $\phi$, since for every $x\in X$, there is exactly one $s$-labeled outgoing edge attached to $x$ and it follows from the definition of $\phi$ that this edge is $\phi(e,s,s)$. Suppose $(y,s,\psi_s(y))$, where $y\neq x$, is some other edge in $B_1(x)$ which is not in the image of $\phi$. Since $y$ belongs to the $1$-neighborhood of $x$, there exists some $t\in K$ such that
\[
(x,t,\psi_t(x))=(x,t,y)
\]
is an edge in $B_1(x)$. But then
\[
(x,st,\psi_{st}(x))=(x,st,(\psi_s\circ\psi_t)(x))=(x,st,\psi_s(y))
\]
is an edge in $B_1(x)$ as well. The definition of $\phi$ and the fact that $y$ has a unique $s$-labeled outgoing edge attached to it now imply that $(y,s,\psi_s(y))$ must have belonged to the image of $\phi$ after all.
\end{proof}


Our next goal is to demonstrate that (3)-soficity for monoids is a very strong condition: a monoid is (3)-sofic if and only if it embeds into a group. Before proving this claim, we establish a lemma which asserts, roughly speaking, that under certain conditions a labeled graph which always looks the same when looking forwards must always look the same when looking backwards. To be more precise, if $\Gamma$ is a graph whose edges are directed and labeled with elements of an alphabet $\AA$ (an \emph{$\AA$-labeled graph}), denote by $\overline{\Gamma}$ the graph obtained by keeping all edge labels of $\Gamma$ the same but reversing the direction of each edge. Given a vertex $x\in\Gamma$, let
\[
B_\infty^+(\Gamma,x)=\varinjlim B_r(\Gamma,x),
\]
and let
\[
B_\infty^-(\Gamma,x)=B_\infty(\overline{\Gamma},x).
\]
As before, if there is no confusion about the graph we are referring to, we may write $B_\infty^+(x)$ instead of $B_\infty^+(\Gamma,x)$, etc. Note that if $\Gamma$ is the Cayley graph (or, more generally, a Schreier graph) of a group constructed with respect to a symmetric generating set $\AA$, then $\Gamma=\overline{\Gamma}$.

\begin{lemma}\label{Cayley}
Let $\Gamma$ be a connected $\AA$-labeled graph such that for each $x\in\Gamma$ and each $a\in\AA$, there exists precisely one outgoing and one incoming edge labeled with $a$ attached to $x$, and such that for any two vertices $x,x'\in\Gamma$, the neighborhoods $B_\infty^+(x)$ and $B_\infty^+(x')$ are isomorphic. Then $\Gamma$ is the Cayley graph of a group.
\end{lemma}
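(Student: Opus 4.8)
The plan is to realise $\Gamma$ as a coset graph and reduce the statement to the freeness of a natural group action. By condition~1, for each label $a\in\AA$ the assignment ``follow the unique outgoing $a$-edge'' defines a self-map $\sigma_a$ of the vertex set, and uniqueness of the incoming $a$-edge makes $\sigma_a$ a bijection. Let $G\colonequals\langle\sigma_a : a\in\AA\rangle\leq\operatorname{Sym}(\Gamma)$. Since backward steps are effected by the $\sigma_a^{-1}$, connectedness of $\Gamma$ makes $G$ act transitively on the vertex set, and $\Gamma$ is precisely the $\AA$-labeled Schreier graph of $G$ relative to the stabiliser $H\colonequals\{g\in G : g(x_0)=x_0\}$ of a fixed base vertex $x_0$. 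If $H=\{1\}$, then $g\mapsto g(x_0)$ is a bijection $G\to\Gamma$ taking the edge $g\to\sigma_a g$ to the $a$-edge issuing from $g(x_0)$, which identifies $\Gamma$ with the left Cayley graph $\Gamma(G,\AA)$. So it suffices to prove that the action of $G$ is free, i.e.\ that every $g\in G$ fixing a single vertex fixes all of them.

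First I would record that a rooted, label- and direction-preserving isomorphism between forward neighborhoods is \emph{unique}: its value on the root is prescribed, and condition~1 then propagates it along outgoing edges to every other vertex. Feeding in the hypothesis that all $B_\infty^+(x)$ are isomorphic produces a canonical family of isomorphisms $\phi_{x,y}\colon B_\infty^+(x)\to B_\infty^+(y)$, and transporting forward paths across them yields what I will call the Positive Coincidence Lemma: if $u,v$ are positive words (elements of the free monoid on $\AA$) with $\sigma_u(x)=\sigma_v(x)$ for even a single vertex $x$, then $\sigma_u=\sigma_v$ on all of $\Gamma$. Taking $v$ empty shows that every positive loop is already trivial: $\sigma_u(x)=x$ for some $x$ forces $\sigma_u=\Id$.

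The step advertised in the preamble---that a graph which looks the same forwards looks the same backwards---I would obtain by a short inversion trick. Every element of the monoid generated by the $\sigma_a^{-1}$ has the form $\sigma_u^{-1}$ for a positive word $u$; so if $\sigma_u^{-1}(x)=\sigma_v^{-1}(x)$, then writing $y$ for this common value gives $\sigma_u(y)=x=\sigma_v(y)$, and the Positive Coincidence Lemma yields $\sigma_u=\sigma_v$, hence $\sigma_u^{-1}=\sigma_v^{-1}$. This backward coincidence statement is exactly the assertion that the backward neighborhoods $B_\infty^-(x)$ are all isomorphic, and in particular that every backward loop is trivial.

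It remains to treat an arbitrary $g\in H$, a reduced word mixing the $\sigma_a$ and their inverses; this is the heart of the matter. A natural strategy is to induct on the number of direction changes of the closed walk that $g$ traces from $x_0$: a reduced, permutation-nontrivial such walk must contain a peak or a valley, and the plan is to use full homogeneity to extract from each such configuration a global relation in $G$ that rewrites $g$ with strictly fewer direction changes, the base case of a monochromatic loop being already known to be trivial. Equivalently, in the language of automorphisms: $\Aut(\Gamma)$ always acts freely here---an automorphism fixing a vertex fixes its whole component by condition~1, applied to incoming as well as outgoing edges---so the entire content of the lemma is that $\Aut(\Gamma)$ acts \emph{transitively}, which is the same as extending each $\phi_{x,y}$ to a global automorphism. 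I expect this bootstrapping---from single-direction homogeneity to the triviality of genuinely mixed loops (equivalently, to the coincidence of all vertex stabilisers)---to be the main obstacle: the homogeneity hypothesis only constrains coincidences seen along monochromatic paths, whereas freeness concerns arbitrary words, and the real work lies in converting local forward/backward data into the global relations that drive the reduction, while keeping careful track of reduced words versus trivial permutations.
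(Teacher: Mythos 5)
Your setup is sound and the reductions you do carry out are correct: the $\sigma_a$ are bijections, $G=\langle\sigma_a\rangle\leq\operatorname{Sym}(\Gamma)$ acts transitively, the lemma is equivalent to freeness of this action, and your Positive Coincidence Lemma together with its backward counterpart is a clean and valid repackaging of the hypothesis. But the proof is not complete, and the step you defer is not a technicality---it is the entire content of the lemma, and the induction you sketch does not go through with only the tools you have established. The Positive Coincidence Lemma needs a coincidence of two positive words \emph{at a single basepoint}. A closed walk with at most two direction changes always produces one: e.g.\ if $\sigma_w\sigma_v^{-1}\sigma_u$ fixes $x$, then with $q\colonequals\sigma_v^{-1}(\sigma_u(x))$ one gets $\sigma_v(q)=\sigma_u(\sigma_w(q))$, whence $\sigma_v=\sigma_u\sigma_w$ globally and the loop is trivial. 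But with three direction changes it does not: if $\sigma_z^{-1}\sigma_w\sigma_v^{-1}\sigma_u$ fixes $x$, the closure relations are $\sigma_u(x)=\sigma_v(q)=p$ and $\sigma_z(x)=\sigma_w(q)=r$, where $q\colonequals\sigma_v^{-1}(p)$; every available coincidence pairs the two \emph{different} basepoints $x$ and $q$, so neither coincidence lemma applies, and no peak or valley of the walk yields a ``global relation'' lowering the count of direction changes. What is missing is the coherence of the family $\{\phi_{x,y}\}$ across backward steps: knowing that all forward cones are abstractly isomorphic does not tell you that $\phi_{x,x_1}$ and $\phi_{q,q_1}$ agree at a common point such as $r$, unless $r$ lies in the forward cone of a point where they are already known to agree (here it need not: $r$ is reached from $p$ by going backward along $v$ and then forward along $w$).

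This missing coherence is precisely what the paper's proof supplies. It takes a single isomorphism $\phi:B_\infty^+(x)\to B_\infty^+(x')$ and \emph{extends} it: first across backward steps, to $B_\infty^+(x)\cup B_\infty^-(y)$ for $y\in B_\infty^+(x)$, arguing that any inconsistency in the extension would force $B_\infty^+(z)\not\cong B_\infty^+(z')$ for some pair of points in the backward orbits, contradicting the hypothesis; then across the forward cones of ``deficient'' vertices picked up along the way; and then iterates these two moves to obtain a full isomorphism $(\Gamma,x)\to(\Gamma,x')$. That gives transitivity of the label-preserving automorphism group, hence (by the freeness observation you already made) regularity, hence that $\Gamma$ is a Cayley graph. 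So your final reformulation---extend each $\phi_{x,y}$ to a global automorphism---identifies the right target, but your word-combinatorial route to it stalls exactly at mixed words with three or more direction changes; an extension-and-bootstrap argument of the paper's type, or some genuinely new idea, is required to close the gap.
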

\begin{proof}
Let $x,x'\in\Gamma$ be two vertices. By assumption, there exists an isomorphism $\phi:B_\infty^+(x)\to B_\infty^+(x')$ of rooted labeled graphs. If $y\in B_\infty^+(x)$, put $y'\colonequals\phi(y)$. We claim that $\phi$ extends to an isomorphism
\[
\phi:B_\infty^+(x)\cup B_\infty^-(y)\to B_\infty^+(x')\cup B_\infty^-(y').
\]
Suppose it did not. Then there would exist a vertex $z\in B_\infty^-(y)$, obtained by starting at $y$ and following a word $w$ in the alphabet $\AA^{-1}$, together with distinct paths $\gamma_1$ and $\gamma_2$ which begin at $z$ and terminate at the same point, whereas the point $z'\in B_\infty^-(y')$, reached by starting at $y'$ and following the word $w$, would be such that the corresponding paths $\gamma_1'$ and $\gamma_2'$ did not terminate at the same point. Either this, or the same situation would occur with the roles of $B_\infty^-(y)$ and $B_\infty^-(y')$ reversed. In either case, it is obvious that $B_\infty^+(z)$ and $B_\infty^+(z')$ would not be isomorphic, a contradiction. It follows by induction that $\phi$ extends to an isomorphism
\[
\phi:\bigcup_{y\in B_\infty^+(x)}B_\infty^-(y)\to\bigcup_{y\in B_\infty^+(x')}B_\infty^-(y)
\]
between the full backwards orbits of $B_\infty^+(x)$ and $B_\infty^+(x')$.

Suppose next that $x_0\in B_\infty^+(x)$ is a \emph{deficient} vertex, i.e.\ a vertex such that $y\colonequals a^{-1}(x_0)\notin B_\infty^+(x)$ for some $a\in\AA$. Put $y'\colonequals a^{-1}(\phi(x_0))$. We claim that $\phi$ also extends to an isomorphism
\[
\phi:B_\infty^+(x)\cup B_\infty^+(y)\to B_\infty^+(x')\cup B_\infty^+(y').
\]
If it did not, then there would exist a word $w$ in the alphabet $\AA$ such that the path obtained by beginning at $y$ and following $w$ terminates at a vertex $z\in B_\infty^+(x)\cap B_\infty^+(y)$ but the path obtained by beginning at $y'$ and following $w$ terminates at a vertex $t'\notin B_\infty^+(x')$. Either this, or the same situation would occur with the roles of $B_\infty^+(y)$ and $B_\infty^+(y')$ reversed. Now let $w'$ be a word in the alphabet $\AA$ such that the path obtained by starting at $y'$ and following $w$ terminates at $z'\colonequals\phi(z)$ (since $z'\in B_\infty^+(y')$, such a path exists). We must then conclude that $B_\infty^+(y)$ and $B_\infty^+(y')$ are not isomorphic: The paths obtained by beginning at $y$ and following $w$ and $w'$, respectively, terminate at the same point $z$, whereas the corresponding paths beginning at $y'$ terminate at distinct points $t'\neq z'$. This is a contradiction.

It follows, finally, that by iterating the previous arguments, $\phi$ can be extended to a full isomorphism $\phi:(\Gamma,x)\to(\Gamma,x')$. One can first extend $\phi$ to an isomorphism between the backwards orbits of $B_\infty^+(x)$ and $B_\infty^+(x')$, then adjoin the forward orbits of a deficient vertex and extend $\phi$ further, and so on. But a connected $\AA$-labeled graph with the property that any two choices of root yield isomorphic rooted labeled graphs is necessarily the Cayley graph of a group.
\end{proof}

The next theorem shows that (3)-soficity is a very strong condition.

\begin{theorem}\label{embed}
A monoid is $(3)$-sofic if and only if it embeds into a sofic group.
\end{theorem}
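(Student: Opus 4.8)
The plan is to prove both implications; the reverse direction is routine, while the forward direction carries the real content. For the reverse direction, suppose $M$ embeds into a sofic group $G$ via a monoid homomorphism $\iota$. Since Definitions~(1)--(4) coincide for groups, $G$ is in particular $(3)$-sofic. Given a finite $K\subseteq M$ and $\eps>0$, I would take a $(\iota(K),\eps)$-action $\psi'\colon\iota(K)\to\End(X)$ whose uniform measure $\mu_X$ is approximately invariant, and pull it back by setting $\psi\colonequals\psi'\circ\iota$. Injectivity of $\iota$ yields condition~(iii); the fact that $\iota$ preserves products and the identity yields conditions~(i) and~(ii); and approximate invariance is inherited verbatim. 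Hence $M$ is $(3)$-sofic.

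For the forward direction, suppose $M$ is $(3)$-sofic with finite generating set $\AA$. The crucial leverage is that approximate invariance forces each $\psi_a$ to be close to a bijection: since $\lVert\mu_X-(\psi_a)_*\mu_X\rVert=\frac{1}{2\lvert X\rvert}\sum_{y\in X}\bigl\lvert\,\lvert\psi_a^{-1}(y)\rvert-1\bigr\rvert$, a bound of $\eps$ means $\psi_a$ can be altered on a set of measure $O(\eps)$ to produce a genuine permutation $\pi_a$ of $X$. Replacing each generator by such a permutation costs only $O(\eps)$ in Hamming distance, so the relations of $M$ remain approximately satisfied, and I obtain an honest action of the free group $F(\AA)$ on the finite set $X$. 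The resulting $\AA$-labeled Schreier graph is symmetric---each vertex has a unique incoming and a unique outgoing edge per generator---and, because the relations fail on only an $O(\eps)$-fraction of points, all but an $O(\eps)$-fraction of vertices have forward ball isomorphic to the corresponding ball $M_{r,\AA}$ in the Cayley graph of $M$.

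Next I would pass to a Benjamini--Schramm limit of these finite Schreier graphs, taken as $\eps\to0$ and $K$ exhausts $M$, obtaining a unimodular random rooted $\AA$-labeled graph $\Gamma$ in which every vertex carries a unique incoming and a unique outgoing edge per generator and in which the root $o$ almost surely satisfies $B_\infty^+(\Gamma,o)\cong B_\infty^+(\Gamma(M,\AA),e)$. By stationarity---via the mass-transport principle---almost surely \emph{every} vertex of $\Gamma$ then has forward neighborhood isomorphic to that of the root, so $\Gamma$ satisfies the hypotheses of Lemma~\ref{Cayley} and is the Cayley graph of a group $G$. The embedding $M\hookrightarrow G$ is read off from the isomorphism above: sending an element of $M$ to the endpoint of any word representing it, traced from the identity of $G$, is well defined and injective precisely because distinct elements of $M$ label distinct vertices of its forward Cayley graph. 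Finally, $G$ is sofic because the finite Schreier graphs used to construct $\Gamma$ exhibit $\Gamma(G,\AA)$ as a Benjamini--Schramm limit of finite graphs, which is exactly $(1)$-soficity.

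The step I expect to be the main obstacle is this passage to the limit, specifically the upgrade from ``almost every vertex looks forward like $M$'' in the finite models to ``every vertex looks forward like $M$'' in $\Gamma$. This homogeneity is exactly what makes Lemma~\ref{Cayley} applicable, and it is not automatic: it should follow from the stationarity of the Benjamini--Schramm limit via the mass-transport principle, but one must verify that the exceptional $O(\eps)$-fraction of defective vertices genuinely vanishes in the limit rather than persisting as a positive-measure set of defects. A secondary difficulty is carrying out the cleanup to permutations for all generators at once while preserving the approximate relations; this is precisely where the strength of $(3)$-soficity over $(2)$-soficity is indispensable, since without approximate invariance the limiting graph would lack unique incoming edges and could not be a group Cayley graph.
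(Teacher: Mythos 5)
Your proposal is correct and follows essentially the same route as the paper: both use approximate invariance to force the generators to act by near-bijections, pass to a weak (Benjamini--Schramm) limit of the finite approximations, upgrade the almost-sure forward-ball condition at the root to a condition at every vertex via invariance of the limit measure, and then invoke Lemma~\ref{Cayley} to identify the limit as the Cayley graph of a sofic group into which $M$ embeds. The points where you differ---replacing the maps by genuine permutations rather than keeping approximately invertible endomorphisms, phrasing the homogeneity upgrade via the mass-transport principle rather than invariance under the free group, and spelling out the easy converse and the soficity of $G$, which the paper leaves implicit---are matters of presentation rather than substance.
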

\begin{proof}
Let $M$ be a (3)-sofic monoid and $\psi:K\to\End(X)$ a $(K,\eps)$-action of $M$ on a finite set $X$ such that $\|\mu_X-(\psi_s)_*\mu_X\|\leq\eps$ for all $s\in K$ (we again set $\psi_s\colonequals\psi(s)$ for notational convenience). Suppose that, for a given $s\in K$, there exists a set $A\subseteq X$ of measure $\mu_X(A)>\eps$ such that $|\psi_s^{-1}(x)|>1$ for all $x\in A$. Then
\begin{align*}
\|\mu_X-(\psi_s)_*\mu_X\|&=\max_{E\subseteq X}|\mu_X(E)-((\psi_s)_*\mu_X)(E)|\\
&=\max_{E\subseteq X}|\mu_X(E)-\mu_X(\psi_s^{-1}(E))|\\
&\geq|\mu_X(A)-\mu_X(\psi_s^{-1}(A))|\\
&\geq|\mu_X(A)-2\mu_X(A)|\\
&=\mu_X(A)>\eps,
\end{align*}
a contradiction. By the same argument, if there exists a set $A\subseteq X$ of measure $\mu_X(A)>\eps$ such that $|\psi_s^{-1}(x)|<1$ for all $x\in A$, then we would have $\mu_X(\psi_s^{-1}(A))=\mu_X(\emptyset)=0$, again allowing us to deduce that $\|\mu_X-(\psi_s)_*\mu_X\|>\eps$. It follows that for each $s\in K$, $|\psi_s^{-1}(x)|=1$ for all $x$ over a set of measure at least $1-\eps$, i.e.\ each $\psi_s$ is approximately invertible.

Accordingly, there exists a subset $X_0\subseteq X$ of measure at least $1-|K|\eps$ (which we can of course choose to be arbitrarily close to $1$ by choosing $\eps$ to be sufficiently small) such that all $\psi_s$, where $s\in K$, satisfy $|\psi_s^{-1}(x)|=1$ for any $x\in X_0$. Now endow $X$ with the labeled graph structure whose edges consist of all triples of the form $(x,a,(\psi_n)_a(x))$, where $a\in\AA\subseteq K$ belongs to a generating set of $M$. By Theorem~\ref{1iff2} (note that (3)-soficity immediately implies (2)-soficity), there exists a subset $X_1\subseteq X$ of measure arbitrarily close to $1$ such that for each $x\in X_1$, the $r$-neighborhood $B_r^+(x)$ is isomorphic to $M_{r,\AA}$. Thus, for all $x\in X_0\cap X_1$, which again has measure arbitrarily close to $1$, $B_r^+(x)\cong M_{r,\AA}$ and $x$ has exactly one outgoing and one incoming edge labeled with $s$ attached to it for all $s\in K$.

Let $\{\psi_n:K_n\to\End(X_n)\}_{n\in\N}$ be a sequence of $(K_n,\eps_n)$-actions such that each $K_n$ and each $X_n$ is finite,
\[
K_1\subseteq\ldots\subseteq K_n\subseteq\ldots,
\]
$\bigcup_{n\in\N}K_n=M$, and $\eps_n\to0$. We also choose our sequence in such a way that each of the uniform probability measures $\mu_n$ is approximately invariant. By the preceding argument, $\mu_n$ is not only approximately invariant with respect to each $(\psi_n)_s\in\End(X_n)$, where $s\in K_s$, but approximately invariant with respect to each $(\psi_n)^{-1}_s$. The weak limit $\mu$ of the measures $\mu_n$ is therefore invariant with respect to the free group generated by $\AA$. Moreover, $\mu$ is concentrated on rooted graphs $(\Gamma,x)$ such that $B_\infty^+(x)\cong M_{\infty,\AA}$ and such that $x$ has exactly one incoming edge labeled with $a$ attached to it for each $a\in\AA$. It follows from Lemma~\ref{Cayley} that $\mu$ is in fact concentrated on the Cayley graph of a group $G$, which is obviously a group into which $M$ embeds.
\end{proof}

\begin{corollary}
The notion of $(1)$-soficity (or $(2)$-soficity) is not equivalent to $(3)$-soficity for monoids.
\end{corollary}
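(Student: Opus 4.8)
The plan is to leverage the two structural theorems just proved. Theorem~\ref{1iff2} collapses (1)- and (2)-soficity into a single notion, and Theorem~\ref{embed} identifies (3)-soficity with embeddability into a sofic group. Since the proof of Theorem~\ref{embed} already records that (3)-soficity implies (2)-soficity, establishing inequivalence only requires the reverse implication to fail: it suffices to produce a single monoid that is (2)-sofic but embeds into no group whatsoever.

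The natural candidate is the monoid $\hat{B}=B\cup\{e'\}$ from the introduction. On one hand, $\hat{B}$ is (2)-sofic, which is exactly the content of Proposition~4.7 of \cite{CSC}. On the other hand, I would argue that $\hat{B}$ embeds into no group by an idempotent count. Adjoining the new identity $e'$ demotes the old identity $e$ of $B$ to an ordinary idempotent, so $\hat{B}$ contains the two distinct idempotents $e$ and $e'$. Any homomorphism from $\hat{B}$ into a group must send every idempotent to the group identity, since the identity is the unique idempotent of a group; hence no such homomorphism can be injective, and $\hat{B}$ embeds into no group (a fortiori into no sofic group).

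Combining these observations closes the argument. By Theorem~\ref{embed}, $\hat{B}$ is not (3)-sofic, whereas it is (2)-sofic, and hence (1)-sofic by Theorem~\ref{1iff2}. Since (3)-soficity does imply (2)-soficity but not conversely, the two notions are inequivalent, as claimed.

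There is essentially no hard step here; the only point requiring care is the choice of example. One cannot use the bicyclic monoid $B$ itself, even though $B$ manifestly embeds into no group (it already contains the distinct idempotents $e$ and $ba$, the latter satisfying $(ba)(ba)=b(ab)a=ba$), because $B$ is the prototypical non-(2)-sofic monoid. The role of the adjoined identity $e'$ is precisely to restore soficity without restoring group-embeddability, and the entire proof consists in verifying that it does exactly this---soficity via \cite{CSC}, non-embeddability via the idempotent obstruction.
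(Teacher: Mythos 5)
Your proof is correct and is essentially the paper's own argument: the paper takes an arbitrary monoid $M$ that does not embed into a group, forms $\hat{M}=M\cup\{e'\}$, and invokes Proposition~4.7 of \cite{CSC} together with Theorem~\ref{embed}, exactly as you do with the concrete choice $M=B$. Your idempotent computation merely makes explicit the non-embeddability that the paper assumes abstractly, so the two arguments coincide in substance.
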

\begin{proof}
Let $M$ be a monoid which does not embed into a group. By Proposition~4.7 of \cite{CSC}, the monoid $\hat{M}\colonequals M\cup\{e'\}$ obtained by adjoining a new element $e'$ to $M$ and defining $e'$ to be the identity is (2)-sofic and hence (1)-sofic. Since $M$ does not embed into a group, $\hat{M}$ does not embed into a group either. Therefore, $\hat{M}$ is not (3)-sofic.
\end{proof}

\begin{corollary}
If a monoid is $(3)$-sofic, then it is $(4)$-sofic.
\end{corollary}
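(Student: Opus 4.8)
The plan is to deduce this immediately from Theorem~\ref{embed} together with the equivalence of Definitions~(1)--(4) for groups. First I would invoke Theorem~\ref{embed}: since $M$ is $(3)$-sofic, it embeds into a sofic group $G$ via some monoid embedding $\iota\colon M\hookrightarrow G$. Because all four definitions of a sofic group coincide, $G$ is sofic in the sense of Definition~\ref{soficgroup}(4); that is, $G$ acts essentially freely on a measure space $(Y,\nu)$ equipped with a $G$-invariant, finitely additive probability measure $\nu$ defined on every subset of $Y$.

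Next I would restrict this action to $M$ along $\iota$, letting each $m\in M$ act on $Y$ as the invertible transformation $\iota(m)$. The invariance of $\nu$ is immediate: since $\nu$ is $G$-invariant and $\iota(M)\subseteq G$, one has $\iota(m)_*\nu=\nu$ for every $m\in M$, so $\nu$ is an $M$-invariant, finitely additive probability measure defined on all subsets of $Y$.

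The point requiring a little care---and the one I expect to be the only real obstacle---is verifying that the restricted $M$-action remains essentially free. Here I would use crucially that $\iota$ is a monoid embedding, so that it is injective and distinct elements of $M$ have distinct, and in particular invertible, images in $G$. For distinct $m,m'\in M$ the set $\{x\in Y\mid\iota(m)x=\iota(m')x\}$ coincides with the fixed-point set of the element $\iota(m)^{-1}\iota(m')$, which is not the identity of $G$ precisely because $\iota$ is injective; essential freeness of the $G$-action then gives that this set has $\nu$-measure zero. Hence the $M$-action is essentially free.

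Putting these observations together, $M$ acts essentially freely on $(Y,\nu)$ with an invariant, finitely additive probability measure defined on all subsets, which is exactly the assertion that $M$ is $(4)$-sofic. I would remark, finally, that one could instead argue directly from the approximately invariant $(K_n,\eps_n)$-actions furnished by $(3)$-soficity, passing to a finitely additive invariant measure via an ultralimit over a nonprincipal ultrafilter; but routing the argument through Theorem~\ref{embed} is considerably cleaner and avoids re-deriving the invariance bookkeeping already carried out there.
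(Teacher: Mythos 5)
Your proposal is correct and follows exactly the paper's argument: invoke Theorem~\ref{embed} to embed $M$ into a sofic group $G$, use the equivalence of the group definitions to obtain an essentially free $G$-action with an invariant finitely additive probability measure on all subsets, and restrict that action to $M$. The only difference is that you spell out the essential-freeness of the restricted action (reducing agreement sets of distinct elements of $M$ to fixed-point sets of nontrivial elements of $G$), a verification the paper leaves implicit in its final sentence.
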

\begin{proof}
Let $M$ be a (3)-sofic monoid. Then $M$ embeds into a sofic group $G$, which, by virtue of being sofic, admits an essentially free action $G\circlearrowright(X,\mu)$ on a measure space equipped with a finitely additive probability measure $\mu$ defined on the set of all subsets of $X$. The restriction of this action to $M$ yields the desired essentially free action of $M$.
\end{proof}
\begin{remark}
We do not know whether $(4)$-soficity also implies $(3)$-soficity but conjecture a positive answer.
\end{remark}
A semigroup $S$ need not have an identity. As such, carrying the notion of (1)-soficity (or, equivalently, (2)-soficity) over to semigroups is problematic: there is no canonical point in the Cayley graph of $S$ whose neighborhood is to serve as a model for approximation. In light of Theorem~\ref{embed}, however, (3)-soficity can be applied to semigroups without an identity element and is indeed closed under the taking of monoid subsemigroups. The result is arguably an unsatisfying definition, as soficity for semigroups then turns out to be no more general a notion than soficity for groups: if a group $G$ is sofic, then any subsemigroup of $G$ is sofic, and, conversely, if a semigroup is sofic, then it embeds into a sofic group. But here we reiterate our point that (3)-soficity, like notions of soficity for other objects, retains a clear connection with ergodic theory.

%

\section{A notion of soficity for general semigroup actions}

Although we have found only a rather restrictive definition of soficity for semigroups, it is natual to pass from semigroups to their actions, whereupon a new idea presents itself. This idea has already played out in the context of groups, where Elek and Lippner developed the notion of a sofic discrete measured equivalence relation \cite{EL}. This notion might equally well be called a \emph{sofic dynamical system}, meaning a countable group acting by measure-preserving automorphisms on a Lebesgue space (recall that a \emph{Lebesgue space}---also called a \emph{standard probability space}---is a probability space whose nonatomic part is isomorphic to the unit interval equipped with Lebesgue measure). We will, in turn, consider a more general kind of dynamical system wherein groups are replaced by semigroups.
\begin{definition}
By a \emph{dynamical system}, we will mean a countable semigroup $S$ acting by measure-preserving endomorphisms on a Lebesgue space $(X,\mu)$. We will work only with left actions (it is of course possible to develop an analogous theory for right actions) and also require that the action of our semigroup be \emph{finite-to-one}, meaning that for any $s\in S$ and almost every $x\in X$, the preimage $s^{-1}(x)$ is finite.
\end{definition}
We would like to define the notion of a sofic approximation to a system $(X,\mu,S)$. In doing so, it is helpful to be able to visualize the action of the semigroup $S$. We do this in the following way. Given a point $x\in X$ and an element $s\in S$, we define the \emph{preimage graph} $\Gamma_x(s)$ of $x$ with respect to $s\in S$ to be the Schreier graph of the action of $s$ restricted to the set $\{x\}\cup s^{-1}(x)$. Thus, $\Gamma_x(s)$ is the graph whose vertex set is $\{x\}\cup s^{-1}(x)$ and whose edge set consists of all triples $(y,s,x)$, where $y\in s^{-1}(x)$. We define a \emph{partial Schreier graph} to be a connected union of preimage graphs. As the set of partial Schreier graphs is naturally ordered by inclusion, we define the \emph{full Schreier graph} of $x$, denoted $\Gamma_x$, to be the maximal partial Schreier graph containing $x$.
\begin{definition}
We denote by $\Sch(S,X)$ the \emph{space of full Schreier graphs} of the action of $S$ on $X$, taking each graph $\Gamma_x\in\Sch(S,X)$ to be rooted at the vertex $x$.
\end{definition}
Note that if $S$ is a group, then a given $\Gamma_x$ is the usual Schreier graph of $S$ acting on the orbit of the point $x$, but a full Schreier graph is in general a significantly larger object, taking into account, as it were, many different forward and backward orbits.

Now let $X$ be a standard Borel space. For the sake of concreteness, we will always assume that $X=\{0,1\}^\infty$. The space $X$ has a natural projective structure
\[
X=\varprojlim X_r,
\]
where $X_r=\{0,1\}^r$ and the connecting maps $\pi_r:X_r\to X_{r-1}$ restrict a binary string of length $r$ to its first $r-1$ digits.  An element $\omega\in X_r$ thus determines the \emph{cylinder set}
\[
C_\omega\colonequals\{x\in X\mid\pi_{\infty,r}(x)=\omega\},
\]
where $\pi_{\infty,r}:X\to X_r$ is the natural projection onto $X_r$, and we denote by $\AAA$ the corresponding Borel $\sigma$-algebra on $X$. Given a finite-to-one action of a countable semigroup $S$ on $X$, we may pass to the associated space of full Schreier graphs $\Sch(S,X)$, which we endow with a projective structure as follows. Let $\{S_i\}_{i\in\N}$ be an enumeration of $S$. Define $\Sch_r(S,X)$ to be the set of (isomorphism classes of) $r$-neighborhoods of the roots of full Schreier graphs $\Gamma_x\in\Sch(S,X)$ which are spanned by the elements $s_1,\ldots,s_r$ (all other edges are neglected) and whose vertex-labels are truncated to their first $r$ digits. It is easy to see that
\[
\Sch(S,X)=\varprojlim\Sch_r(S,X),
\]
where the connecting maps $\rho_r:\Sch_r(S,X)\to\Sch_{r-1}(S,X)$ are the obvious restriction functions. As before, an element $U\in\Sch_r(S,X)$ determines the cylinder set
\[
C_U\colonequals\{\Gamma_x\in\Sch(S,X)\mid\rho_{\infty,r}(\Gamma_x)=U\},
\]
where $\rho_{\infty,r}:\Sch(S,X)\to\Sch_r(S,X)$ is the natural projection onto $\Sch_r(S,X)$, and we denote by $\BBB$ the corresponding Borel $\sigma$-algebra on $\Sch(S,X)$. It should be pointed out that an element of $\Sch_r(S,X)$ is to be understood as an $r$-neighborhood taken with respect to the usual graph metrics on full Schreier graphs $\Gamma_x$.

Our idea is to pass from $X$ to $\Sch(S,X)$, a space which records all of the information about the action of $S$. The map $f:(X,\AAA)\to(\Sch(S,X),\BBB)$ given by $f(x)=\Gamma_x$ is clearly a bijection, and it is easy to see that $f^{-1}$ is measurable: Given a cylinder set $C_\omega\subseteq X$, where $\omega\in X_r$, let $\{U_i\}_{i\in I}$ be the collection of all partial Schreier graphs $U_i\in\Sch_r(S,X)$ whose roots are labeled with $\omega$. Then
\[
f(C_\omega)=\bigcup_{i\in I}C_{U_i},
\]
which is measurable (note that $I$ is at most countable, since the action of $S$ is finite-to-one). Let $\AAA'$ denote the completion of $\AAA$ with respect to the measure $\mu$. Then $f_*^{-1}\BBB\subseteq\AAA'$ (the image of a Borel set under a measurable function is analytic and hence Lebesgue measurable), which allows us to push forward the measure $\mu$, obtaining a measure $\nu\colonequals f_*\mu$ on $\Sch(S,X)$. Let $\BBB'$ denote the completion of $\BBB$ with respect to $\nu$.
\begin{proposition}
The map $f:(X,\AAA',\mu)\to(\Sch(S,X),\BBB',\nu)$ given by $f(x)=\Gamma_x$ is an $S$-equivariant isomorphism of Lebesgue spaces.
\end{proposition}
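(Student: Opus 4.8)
The plan is to establish that $f$ is an isomorphism of Lebesgue spaces by verifying three things: that $f$ is a bimeasurable bijection, that it is measure-preserving (i.e.\ $\nu = f_*\mu$ by construction, and $f^{-1}$ pulls $\nu$ back to $\mu$), and that it intertwines the $S$-actions on the two spaces. Much of the groundwork has already been laid in the discussion preceding the statement: we know that $f$ is a bijection, that $f^{-1}$ is measurable, and that $f_*^{-1}\BBB\subseteq\AAA'$, which is precisely what allows the pushforward measure $\nu\colonequals f_*\mu$ to be defined. So the first step is simply to collect these observations and note that $f$ is bimeasurable with respect to the completed $\sigma$-algebras $\AAA'$ and $\BBB'$, and that $\nu=f_*\mu$ holds by definition, making $f$ measure-preserving essentially tautologically.

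The genuinely substantive step is to define the $S$-action on $\Sch(S,X)$ and check $S$-equivariance, i.e.\ that $f(s\cdot x)=s\cdot\Gamma_x$ for each $s\in S$ and $\mu$-almost every $x$. First I would pin down how $S$ acts on the target: an element $s\in S$ should send the rooted full Schreier graph $\Gamma_x$, rooted at $x$, to the full Schreier graph rooted at $sx$, which is $\Gamma_{sx}$. Since $sx$ lies in the same connected full Schreier graph as $x$ (the full Schreier graph, by its definition as a maximal connected union of preimage graphs, incorporates both forward and backward orbits, so $\Gamma_x$ and $\Gamma_{sx}$ coincide as \emph{unrooted} graphs), the action of $s$ on $\Sch(S,X)$ is nothing more than moving the root from $x$ to $sx$ along the edge $(x,s,sx)$. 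With this definition, equivariance becomes the identity $f(sx)=\Gamma_{sx}=s\cdot\Gamma_x$, which holds by construction for every $x$ and every $s$. The only care needed is to confirm that this root-shifting operation is a measure-preserving endomorphism of $(\Sch(S,X),\BBB',\nu)$, but this follows from equivariance together with the fact that $f$ is a measure-space isomorphism and that $S$ acts on $(X,\mu)$ by measure-preserving endomorphisms.

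I expect the main obstacle to lie not in equivariance, which is almost formal once the action on the target is correctly defined, but in the measure-theoretic bookkeeping required to justify that $f$ is an honest isomorphism of \emph{Lebesgue} spaces rather than merely a measurable bijection. In particular, one must be careful that the completions $\AAA'$ and $\BBB'$ are matched up correctly under $f$: we are given $f_*^{-1}\BBB\subseteq\AAA'$, and to conclude that $f$ is an isomorphism we also need the reverse inclusion $f_*\AAA\subseteq\BBB'$, or equivalently that $f$ itself (not just $f^{-1}$) is measurable after completion. The cleanest route here is to invoke the standard fact that a measurable bijection between standard Borel spaces has a measurable inverse, and that a measure-preserving bijection between Lebesgue spaces which is measurable in one direction is automatically an isomorphism of the completed measure spaces. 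Thus the proof reduces to citing this general principle, having already verified measurability of $f^{-1}$, measurability of $f$ (by the symmetric cylinder-set argument, since $f$ and $f^{-1}$ play interchangeable roles once the bijection is established), and the defining relation $\nu=f_*\mu$; equivariance is then appended as the final, essentially immediate, observation.
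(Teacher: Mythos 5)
Your proof is correct and its skeleton matches the paper's: collect the groundwork already established before the statement (bijectivity, measurability of $f^{-1}$ via the cylinder-set computation, $f_*^{-1}\BBB\subseteq\AAA'$, and the defining relation $\nu\colonequals f_*\mu$), dispose of equivariance as the formal identity $s(\Gamma_x)=\Gamma_{s(x)}$, and then upgrade measurability to the completions. Your unwinding of equivariance as root-shifting---using that $\Gamma_x$ and $\Gamma_{sx}$ coincide as unrooted graphs because the edge $(x,s,sx)$ lies in the preimage graph of $sx$---is more explicit than the paper's one-line remark, and it is accurate. Where you genuinely differ is the completion step. The paper argues by hand: for Borel $B\in\BBB$ it already knows $f^{-1}(B)\in\AAA'$ (the image of a Borel set under the Borel map $f^{-1}$ is analytic, hence Lebesgue measurable), and for a set $B'$ of the completion contained in a $\nu$-null Borel set $B$ it observes $f^{-1}(B')\subseteq f^{-1}(B)$ with $\mu(f^{-1}(B))=\nu(B)=0$, so $f^{-1}(B')\in\AAA'$; the same argument with the roles reversed handles $f^{-1}$. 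You instead invoke the Lusin--Souslin theorem (an injective Borel map between standard Borel spaces is a Borel isomorphism onto its image) applied to $f^{-1}$, together with the standard fact that a measure-preserving measurable bijection of Lebesgue spaces is an isomorphism of the completions. That is legitimate---$\Sch(S,X)$ is a projective limit of countable discrete spaces, hence Polish, hence standard Borel---and it even yields the stronger conclusion that $f$ is genuinely Borel bimeasurable, where the paper settles for universal measurability; the cost is citing heavier descriptive-set-theoretic machinery where the paper's null-set bookkeeping is elementary.

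One parenthetical in your write-up is wrong, though harmlessly so: measurability of $f$ does not follow ``by the symmetric cylinder-set argument,'' because the roles of $f$ and $f^{-1}$ are not symmetric. The computation $f(C_\omega)=\bigcup_{i\in I}C_{U_i}$ is a countable union of cylinder sets precisely because the action is finite-to-one; by contrast, $f^{-1}(C_U)$---the set of points $x$ whose truncated $r$-neighborhood in $\Gamma_x$ has a prescribed isomorphism type---is not a countable union of cylinder sets of $X$, since membership depends on coincidences among preimages of $x$ under $s_1,\ldots,s_r$ and not on finitely many coordinates of $x$ alone. This asymmetry is exactly why both the paper and your main argument establish measurability of $f^{-1}$ first and then transfer it to $f$ abstractly (via analyticity in the paper, via Lusin--Souslin in yours). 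Delete that remark and rely solely on the Lusin--Souslin route; your proof then stands.
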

\begin{proof}
It is clear that $f$ is an $S$-equivariant bijection, since $s(\Gamma_x)=\Gamma_{s(x)}$. Moreover, $f$ is measurable: If $B\in\BBB$ is a Borel set, then as noted above, $f^{-1}(B)\in\AAA'$. If $B'\in\BBB'\backslash\BBB$, then $B'\subseteq B$, where $B$ is a Borel set with $\nu(B)=0$. But this implies that $\mu(f^{-1}(B))=0$. Since $f^{-1}(B')\subseteq f^{-1}(B)$, we find that $f^{-1}(B')\in\AAA'$. The exact same argument shows $f^{-1}$ to be measurable as well, which establishes the claim.
\end{proof}
Our goal is to approximate a given dynamical system $(X,\mu,S)$ with a finite system. Accordingly, suppose that $X'$ is a graph whose vertices are labeled with elements of $X$ (i.e.\ $X'$ is an \emph{X-labeled set}) and whose edges are directed and labeled with elements of $S$. If $\mu'$ is a probability measure on $X'$, then we claim that $\mu'$ naturally determines a probability measure $\nu'$ on $\Sch(S,X)$: Given an $r$-neighborhood $U\in\Sch_r(S,X)$, set
\[
\nu'(C_U)=\sum_{B_r(x)\cong U}\mu'(x)
\]
as long as there exists an $x\in X'$ such that $B_r(x)\cong U$ when the vertex-labels of $X'$ are truncated to their first $r$ digits. Once this requirement is met, $\nu'$ may be extended arbitrarily to other cylinder sets in such a way that it becomes a probability measure (the measure $\mu'$ therefore technically defines a family of measures on $\Sch(S,X)$, but the choice of representative will not matter for us). Note that if $K\subseteq S$ and $\psi:K\to\End(X')$ is a $(K,\eps)$-action on an $X$-labeled set $X'$, then $X'$ comes equipped with a natural graph structure, which consists of all triples $(x,s,\psi_s(x))$, where $x\in X'$ and $s\in K$.
\begin{definition}
Let $(X,\mu,S)$ be a dynamical system, let $K\subseteq S$, and let $\eps>0$. A triple $(X',\mu',\psi)$, where $X'$ is a finite $X$-labeled set, $\mu'$ is a probability measure on $X'$, and $\psi:K\to\End(X')$ is a $(K,\eps)$-action, is a \emph{$(K,\eps)$-approximation} to $(X,\mu,S)$ if
\[
\|\mu'-\psi(s)_*\mu'\|\leq\eps
\]
for all $s\in K$ and
\[
\left|\int f\,d\nu-\int f\,d\nu'\right|\leq\eps
\]
for any bounded continuous function $f:\Sch(S,X)\to\R$, where $\nu$ and $\nu'$ are the probability measures on $\Sch(S,X)$ determined by $(X,\mu,S)$ and $(X',\mu',\psi)$, respectively.
\end{definition}
We thus require the measure $\mu'$ on $X'$ to be approximately invariant with respect to $\psi$ and the measure $\nu'$ to be close, in the weak topology, to the $S$-invariant measure $\nu$.
\begin{definition}
A dynamical system $(X,\mu,S)$ is \emph{sofic} if for any finite $K\subseteq S$ and any $\eps>0$, it admits a $(K,\eps)$-approximation.
\end{definition}
It is worth pausing to point out that our approach mimics, in many ways, the approach of Elek and Lippner, which in turn 
mimics the idea behind so-called Benjamini-Schramm convergence, introduced in \cite{BS}. Here a sequence of graphs (or other objects---in particular, graphs which carry additional stucture) is interpreted as a sequence of probability measures on the space of rooted graphs by choosing the position of the root of each graph in the sequence uniformly at random. Each such measure is an invariant (or \emph{unimodular}) measure, and one may study the corresponding weak limit of the sequence. The key difference with our approach is that we consider general (approximately) invariant measures on finite structures, and not merely those which arise upon choosing the root uniformly at random. We conclude with a basic example of a sofic dynamical system.
\begin{example}
Denote by $\S^1=[0,1]/\!\!\sim$ the circle, obtained by gluing together the endpoints of the unit interval, and consider the classical angle-doubling map $f:\S^1\to\S^1$ given by
\[
f(x)=2x\,\,(\modulo 1).
\]
The map $f$ is $2$-to-$1$, and the Lebesgue measure $\lambda$ on $\S^1$ inherited from $[0,1]$ is invariant with respect to the $\N$-action determined by $f$. An element of $(\Sch(\N,X),\nu)$, the corresponding space of full Schreier graphs, is almost surely a rooted tree each of whose vertices has two incoming $f$-labeled edges attached to it and one outgoing $f$-labeled edge attached to it (here we have made an $\N$-equivariant identification between $\S^1$ and $X=\{0,1\}^\infty$ and, abusing notation, will also denote by $\lambda$ the image of the Lebesgue measure under this identification). To construct a sofic approximation to the system $(\S^1,\lambda,\N)$, let $X'$ be the disjoint union of all possible preimage trees of elements $\omega\in\{0,1\}^r$, up to depth $k$. That is, a connected component of $X'$ consists of an $X$-labeled vertex at level $0$, together with two $X$-labeled preimage vertices at level $1$, and so on up to level $k$, which consists of $2^k$ $X$-labeled preimage vertices. A connected component $U$ of $X'$ also determines a cylinder set $C_U$, which has a certain measure. Define a measure on $X'$ by assigning mass $2^{-k}\lambda(C_U)$ to each vertex at level $k$ of a connected component $U$ of $X'$, then normalize to obtain a probability measure $\lambda'$. It is not difficult to see that, provided $k$ is chosen to be sufficiently large, the measure $\lambda'$ is approximately invariant with respect to the elements
\[
\{f,\ldots,f^{\circ n}\}\subseteq\N
\]
and moreover that the associated measure $\nu'$ on $\Sch(\N,X)$ approximates $\nu$ in the weak topology. Choosing ever larger values of $k$ and carrying out this construction shows the system $(\S^1,\lambda,\N)$ to be sofic.
\end{example}

It would appear that there is much to investigate concerning our definition of a sofic dynamical system. Idle questions include: Is every $\N$-action sofic? (We conjecture a positive answer.) Is every dynamical system (in our sense) sofic? Is it possible to work only with the full orbit equivalence relation determined by a semigroup action, thereby developing a theory analogous to that of Feldman and Moore (see \cite{FM})? Suppose a dynamical system $(X,\mu,S)$ admits a \emph{natural extension} $(\widetilde{X},\widetilde{\mu},S)$ that makes each $s\in S$ invertible---what, if any, is the relationship between soficity of the original system and soficity of the extension? Which semigroups admit faithful actions by measure-preserving endomorphisms on a Lebesgue space?

A further idea is implicit in our work. Having relaxed the requirement that sofic approximations to a dynamical system come equipped with a uniform measure, it is natural to ask whether there is a reasonable way to approximate dynamical systems which come only with a quasi-invariant measure (so that the measure class, rather than the measure itself, is preserved), or even an arbitrary measure, perhaps by requiring the Radon-Nikodym derivatives associated to the finite approximating system to converge to those of the original system. It could be interesting to develop soficity in this direction as well and examine its relationship with soficity for dynamical systems with an invariant measure. We may pursue this idea in a future work.

\end{document}